\newtheorem{theorem}{Theorem}[section]
\newtheorem{lemma}[theorem]{Lemma}
\newtheorem{proposition}[theorem]{Proposition}
\newtheorem{remark}[theorem]{Remark}
\def\square{\hbox{\vrule\vbox{\hrule\phantom{o}\hrule}\vrule}}
\def\R{\mathbb {R}}
\def\C{\mathbb {C}}
\def\re{\mathop{\rm Re}\nolimits}
\def\im{\mathop{\rm Im}\nolimits}
\def\la{\langle}
\def\ra{\rangle}
\def\O{\mathcal O}
\def\eps{\varepsilon}
\newcommand{\indic}{1\!\!1}
\newcommand{\be}{\begin{equation}}
\newcommand{\ee}{\end{equation}}
\numberwithin{equation}{section}
\begin{document}

\title[Excited resonance widths for Helmholtz resonators]{Excited resonance widths for Helmholtz resonators with straight neck}

\begin{abstract} 
We consider resonances associated with excited eigenvalues of the cavity of a general Helmholtz resonator with straight neck. Under the assumption that the neck stays away from the nodal set of the corresponding eigenstate, we generalise the optimal exponential lower bound on the width of the resonance, that we have obtained in a previous paper for the ground resonance only.
\end{abstract}

\author[Duyckaerts, Grigis, Martinez]{Thomas Duyckaerts${}^{1,2}$, Alain Grigis${}^1$ \& Andr\'e Martinez${}^3$}

\subjclass[2000]{Primary 81Q20 ; Secondary 35P15, 35B34}
\keywords{Helmholtz resonator, scattering resonances, lower bound}

\maketitle

\addtocounter{footnote}{1}
\footnotetext{Université Sorbonne Paris Cité, Universit\'e Paris 13, Institut Galil\'ee, D\'epartement de Math\'ematiques, avenue J.-B. Cl\'ement, 93430 Villetaneuse, France}
\addtocounter{footnote}{1}
\footnotetext{Institut Universitaire de France}
\addtocounter{footnote}{1}
\footnotetext{Universit\`a di Bologna, Dipartimento di
Matematica, Piazza di Porta San Donato 5, 40127 Bologna,
Italy.}

\setcounter{section}{0}

\section{Introduction}

We continue our study of the Helmholtz resonator with straight neck, started in \cite{DGM}. Let us recall that a resonator consists of a bounded cavity (the chamber) connected to the exterior by a thin tube (the neck). The sounds produced by it are mathematically described by the resonances of the Dirichlet Laplacian $-\Delta_{\Omega}$ on the domain $\Omega$ consisting of the union of the chamber, the neck, and the exterior.

Many works have been devoted to the location of such resonances, and, in particular, to estimates on their widths (that is, the absolute value of their imaginary part): see, e.g., \cite{Be, Fe, FL, HM, MN1, MN2} and references therein. 

In our previous paper \cite{DGM}, we studied the width of the first resonance $\rho (\varepsilon)$ of a general Helmholtz resonator with a straight neck of size $\varepsilon <<1$ and length $L$. Using several Carleman estimates, including estimates with limiting Carleman weights (see \cite{KSU,DKSU}), we proved  that
\be
\label{exactlim}
\lim_{\varepsilon \to 0_+} \varepsilon \ln |\im\rho (\varepsilon)| = -2\alpha_0 L,
\ee
where $\alpha_0$ is the ground-state energy of the cross section of the neck.

Here, we generalise this result to higher resonances, that is to resonances close to (non degenerate) excited states of the cavity (see Theorem \ref{mainth}). 

From a physical point of view, the real part of the resonances  correspond to the frequencies of the sound produced, while the inverse of their width represents their duration (or life-time). 
To illustrate this claim, we give the long-time asymptotic behaviour of the solutions to the wave equation in the resonator that are spectrally concentrated near such energies (see Theorem \ref{propevol}). In particular, this behaviour highlights the fact that the frequencies of the sounds produced by the resonator are determined by the shape of the chamber, while their duration by the length and the width of the neck. 

The general procedure used in \cite{DGM} consisted in first relating the width of the resonance to the size of the resonant state $u$ in the exterior domain, then to relate this size to that of the corresponding Dirichlet eigenfunction $v$ of the domain ``cavity+neck'' inside the neck, and finally to that of the corresponding Dirichlet eigenfunction of the cavity in a neighbourhood of the points of contact between the neck and the cavity.

It is important to observe that all this procedure remains valid for excited resonances, except the last argument that strongly relies on the fact that the function $v$ keeps a constant sign in the neck. However, in the case where this last property is satisfied, then all the proof of \cite{DGM} remains valid and leads to \eqref{exactlim}. Therefore, our main effort in the current paper will consist in proving this non-vanishing property of $v$ in (and close to) the neck.

For this purpose, it is natural to add the assumption that the points of contact between the tube and the cavity stay away from the nodal set of the corresponding Dirichlet eigenfunction of the cavity alone. Then, in order to deduce the non vanishing of $v$ near the neck, it would be sufficient to have some stability results on the nodal set under perturbation of the domain. As far as we know, such results are not available in the literature, and it seems that the main difficulty comes from the fact that, for general Dirichlet eigenfunctions, the number of nodal domains may change after perturbation, without precise control on it. In the last section, we prove that, actually, this number cannot increase (see Proposition \ref{stabnod}), but this is still not enough to deduce the result on the resonator.

To overcome this difficulty, we have to compare more precisely $v$ with the eigenfunction of the cavity, and this is done by comparing the various resolvents that are involved, through the abstract separation of the cavity and the neck (in a spirit similar to that of \cite{HM}). In that way, it can be proved that $v$ cannot vanish at the points of the cavity that are close to the neck. Then, the final key-argument consists in the observation that the volume of a nodal domain cannot tend to 0 as $\varepsilon \to 0$ (this may be seen as a consequence of the Faber-Krahn inequality, but for our purpose we only need a weaker property that we prove in an elementary way: see Section \ref{section5}), so that, in particular, the neck of the resonator cannot contain any nodal domain.

Concerning the long time behaviour of the solutions to the wave equation, in addition to the main expected exponential term, we also provide the asymptotic of the remainder term as $\varepsilon \to 0$, and our result on the widths permits us to give precise estimates on the time interval where the exponential term constitutes the main contribution (see Theorem \ref{propevol} and Remark \ref{timeinterval}). 
Our result should be compared to that of \cite{TZ} or \cite[Theorem 2]{BuZw} that treat the case of some fixed compact perturbation of the Euclidean space, and obtain asymptotic expansions of solutions to the wave equation in term of resonances of the corresponding operator.

As a by-product of our proof, we also obtain a generalisation to excited states of the result of \cite{BHM1, BHM2} concerning the splitting of eigenvalues in a system consisting of two symmetric cavities connected by a thin tube (see Remark \ref{splitt}).

The plan of the paper is as follows: In the next section, we give the precise assumptions and state our result. The third section contains backgrounds results, and a reduction of the problem. Section 4 is devoted to precise estimates on the difference between the resonant state and the corresponding Dirichlet eigenfunction of the cavity. The proof of the main result is completed in Section 5, while the long time behaviour for the wave equation is obtained in Section 6. Finally, Section 7 contains a result on the number of nodal domains under perturbation.

{\bf Acknowledgments}. We would like to thank an anonymous referee of \cite{DGM} for his suggestion to relate our results on resonances to the long-time behaviour of the solutions to the  wave equation for the Helmholtz resonator. A.M. would like to thank the LAGA of the University Paris 13 for its hospitality during January and June 2017, that has made possible this collaboration.

\section{Assumptions and results}
\label{S:results}

Let $\mathcal C$ and $\mathcal B$ be two bounded open sets in $\R^n$ ($n\geq 2$), with $\mathcal C^\infty$ boundary, and denote by $\overline{\mathcal C}$, $\overline{\mathcal B}$
their closures, and by $\partial{\mathcal C}$, 
$\partial{\mathcal B}$ their boundaries. We assume that $\mathcal C$ is connected, $\mathcal B$ is contractible,  and that Euclidean coordinates $x=(x_1,\dots, x_n)=:(x_1,x')$ can be chosen in such a way that, for some 
$L>0$,  one has, 
\be
\label{hyp1}
\overline{\mathcal C}\subset \mathcal B\,\,; \,\, 0\in \partial{\mathcal C} \,\,; \, \,M_0:=(L,0_{\R^{n-1}})\in \partial{\mathcal B}\,\,; \,\,
{[0,L]}\times\{0_{\R^{n-1}}\}\subset \overline{\mathcal B}\backslash {\mathcal C}.
\ee
We also assume,
\be
\label{hyp2}
{[0,L]}\times\{0_{\R^{n-1}}\} \mbox{ is transversal to } \partial{\mathcal B} \mbox{ at } M_0,\mbox{ and to }\partial{\mathcal C} \mbox{ at } 0_{\R^{n}}.
\ee

Let  $D_1\subset \R^{n-1}$ be a bounded domain containing the origin, with smooth boundary $\partial D_1$. For $\varepsilon >0$ small enough, we set $D_\varepsilon:=\varepsilon D_1$ and,
$$
\begin{aligned}
&{\mathbf E}:= \R^n \backslash \overline{\mathcal B};\\
&{\mathcal T}(\varepsilon):= ([-\varepsilon_0, L+\varepsilon_0]\times D_{\varepsilon}) \cap \left(\R^n\backslash ({\mathbf E}\cup{\mathcal C})\right);\\
&\mathcal C(\varepsilon) = {\mathcal C} \cup \mathcal{T}(\varepsilon),
\end{aligned}
$$
where $\varepsilon_0>0$ is fixed sufficiently small in order that $[-\varepsilon_0, L+\varepsilon_0]\times\{0_{\R^{n-1}}\}$ crosses $\partial{\mathcal C}$ and $\partial{\mathcal B}$ at one point only.
Then,
the resonator is defined as, 
$$\Omega(\varepsilon):={\mathcal C}(\varepsilon) \cup{\mathbf E}.$$
(See Figure \ref{fig}).
\begin{figure}[h]
\caption{The resonator}
\label{fig} 
\includegraphics[scale=1]{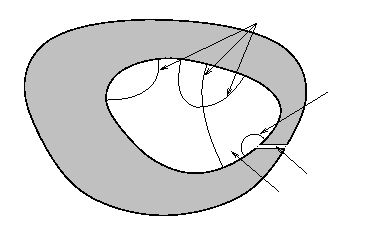}
 \put(-113,27){$\mathcal{C}(\varepsilon)=\mathcal{C}\cup \mathcal{T}(\varepsilon)$}
\put(-130,205){$G_0$ (Nodal set of $u_0$)}
 \put(-100,40){Cavity $\mathcal{C}$}
 \put(-65,53){$\mathcal{T}(\varepsilon)$}
\put(-44,145){$\{|x|=\delta\}$}
 \put(-107,88){$\scriptstyle{0}$}
 \put(-74,86){$\scriptstyle{M_0}$}
 \put(-200,50){$\mathcal{B}$}
\end{figure}

For any domain $Q$, let $P_Q$ denote the Laplacian $-\Delta_Q$ on the interior $\overset{\circ}{Q}$ of $Q$ with Dirichlet boundary conditions on $\partial Q$,
and set $P_\varepsilon:= P_{\Omega_\varepsilon}$. 

The resonances of $P_\varepsilon$ are defined as the eigenvalues of the operator obtained by performing a complex dilation with
respect to $x$, for $|x|$ large (see e.g. \cite{HM}, \cite{DGM}).

We are interested in those
resonances of $P_\varepsilon$ that are close to the eigenvalues of $P_{\mathcal C}$. Thus, let $\lambda_0 >0$ be an eigenvalue 
of $P_{\mathcal{C}}$ with $u_0$ the  corresponding normalized eigenfunction. In \cite{DGM}, we assumed that $\lambda_0$ was the lowest eigenvalue of $P_{\mathcal C}$, and under this condition, we proved that the unique resonance $\rho(\varepsilon)\in\mathbb{C}$ of  $P_\varepsilon$ such that $\rho(\varepsilon)\to \lambda_0$ as $\varepsilon\to 0_+$, satifies
\be
\lim_{\varepsilon\to 0_+} \varepsilon \ln |\im \rho(\varepsilon) | = -2\alpha_0 L,
\ee
where $\alpha_0$ is the square root of the first eigenvalue of $-\Delta_{D_1}$.

The importance of this assumption on $\lambda_0$ came from the fact that, in that case, the unique eigenvalue of $\lambda(\varepsilon)$ of 
$P_{\mathcal{C}(\varepsilon)}$ close to $\lambda_0$ is necessarily its lowest eigenvalue, too, and in particular the corresponding eigenfunction $v_\varepsilon$ keeps a constant sign inside the tube ${\mathcal T}(\varepsilon)$. This was essential in order to perform the last argument of the proof of the result: see \cite{DGM}, Section 6.

Here, we plan to remove this assumption, in order to treat excited resonances, too. 
We assume,

\be
\label{hyp3}
\lambda_0 \mbox{ is a simple eigenvalue of } P_{\mathcal C}.
\ee

Then, by the results of \cite{HM}, we know there exists a unique resonance $\rho(\varepsilon)\in\mathbb{C}$ of  $P_\varepsilon$ such that $\rho(\varepsilon)\to \lambda_0$ as $\varepsilon\to 0_+$.

We denote by $u_0$ the normalised eigenfunction of $P_{\mathcal C}$ associated with $\lambda_0$, and by $G_0$ its nodal set, that is,
\be
G_0:=\overline{\{x\in {\mathcal C}\, ;\, u_0(x)=0\}},
\ee
and we further assume,

\be
\label{hyp4}
0_{\R^{n}}\notin G_0.
\ee

We now state our main result.
\begin{theorem}\sl 
\label{mainth}
Under Assumptions (\ref{hyp1}), (\ref{hyp2}), (\ref{hyp3}), and (\ref{hyp4}), one has,
\be
\lim_{\varepsilon\to 0_+} \varepsilon \ln |\im \rho(\varepsilon) | = -2\alpha_0 L.
\ee
\end{theorem}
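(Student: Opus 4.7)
The overall strategy is to reuse the three-step framework of \cite{DGM} unchanged wherever possible, and to replace only the last step, which is the only one that exploited the constant sign of the eigenfunction $v_\varepsilon$ of $P_{\mathcal{C}(\varepsilon)}$ inside the tube. Denoting by $\lambda(\varepsilon)\to\lambda_0$ the eigenvalue associated with $v_\varepsilon$, the first two steps, which go through verbatim, reduce the problem to a two-sided estimate of the form $\varepsilon\ln\|v_\varepsilon\|_{L^2(\mathcal{T}(\varepsilon))\cap\{x_1\sim L\}} = -\alpha_0 L + o(1)$. The upper bound on $|\im\rho(\varepsilon)|$ follows directly from the Agmon-type estimates in the neck and requires nothing new; the delicate point is the matching lower bound, which in \cite{DGM} proceeded by propagating a quantitative non-vanishing of $v_\varepsilon$ from the cavity-side mouth $0_{\R^n}$ through the tube via a Carleman inequality with limiting weight, and it is this propagation that rests on $v_\varepsilon$ having a constant sign in $\mathcal T(\varepsilon)$.

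The first genuinely new ingredient is a quantitative comparison of $v_\varepsilon$ with $u_0$ near the cavity-side mouth. I would implement this by an abstract Grushin/Feshbach reduction in the spirit of \cite{HM}, decomposing $L^2(\mathcal C(\varepsilon)) = L^2(\mathcal C)\oplus L^2(\mathcal T(\varepsilon))$ and writing $P_{\mathcal C(\varepsilon)} - z$ as a $2\times 2$ matrix whose off-diagonal entries are localised near the junction and are of small norm (a negative power of $\varepsilon$ is lost by trace-like terms but is more than compensated by the exponential decay of the transverse modes in the thin tube). Inverting the block corresponding to the neck, whose spectrum is pushed to infinity at rate $\alpha_0^2/\varepsilon^2$, reduces the eigenvalue problem to a small effective perturbation of $P_{\mathcal C}$ acting in $L^2(\mathcal C)$, and yields
\be
\|v_\varepsilon - u_0\|_{H^1(K)} = o(1) \quad (\varepsilon\to 0_+)
\ee
for any compact $K\subset\overline{\mathcal C}$ whose interior meets $\partial\mathcal C$ only at $0_{\R^n}$. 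Combined with $u_0(0_{\R^n})\neq 0$ by Assumption \eqref{hyp4} and standard elliptic regularity up to the boundary, this produces a $\delta>0$ and a $c>0$, both independent of $\varepsilon$, such that $v_\varepsilon$ keeps a constant sign, say positive, on $B(0,\delta)\cap \mathcal C$ with $|v_\varepsilon|\geq c$.

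What remains is to transfer this non-vanishing from the cavity side into the entire neck. Here I would argue by contradiction: if $v_\varepsilon$ changes sign inside $\mathcal T(\varepsilon)$, then along the connected nodal set there is at least one nodal domain $\omega_\varepsilon$ of $v_\varepsilon$ whose closure is contained in $\mathcal T(\varepsilon)\cup B(0,\delta)$ and avoids the set where $v_\varepsilon$ is already known to be bounded away from zero. Such a domain is confined to a product $(-\varepsilon_0,L+\varepsilon_0)\times D_\varepsilon$ of cross-section $\varepsilon D_1$, so its first Dirichlet eigenvalue is bounded below by $\alpha_0^2/\varepsilon^2$, whereas on $\omega_\varepsilon$ the function $v_\varepsilon$ is a Dirichlet eigenfunction of eigenvalue $\lambda(\varepsilon)\to\lambda_0$, a contradiction. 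This is the Faber–Krahn-type observation that the paper treats in Section \ref{section5} in an elementary way; a possible slight complication is a nodal domain of $\omega_\varepsilon$ that extends slightly into the cavity part $\mathcal C\cap B(0,\delta)$, which is ruled out precisely by the lower bound $|v_\varepsilon|\geq c$ on $B(0,\delta)\cap \mathcal C$.

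Once the constant-sign property of $v_\varepsilon$ in $\mathcal T(\varepsilon)$ is secured and a quantitative lower bound on $v_\varepsilon$ near $x_1=0$ inside the neck is inherited from the comparison with $u_0$, the remaining analysis is literally the one of \cite{DGM}: Carleman estimates with limiting weight propagate this bound from $x_1=0$ to $x_1=L$ in the tube, yielding
\be
\varepsilon\ln|\im\rho(\varepsilon)| \geq -2\alpha_0 L + o(1),
\ee
which matches the upper bound and proves the theorem. The main obstacle is clearly the resolvent comparison step: extracting a norm estimate on $v_\varepsilon - u_0$ that is strong enough (pointwise, via $H^1\hookrightarrow C^0$ in the relevant regime or via elliptic interior/boundary regularity) while keeping the error estimates quantitatively explicit in $\varepsilon$; the nodal-domain argument is conceptually simple once the Grushin reduction has delivered the quantitative non-vanishing near the mouth.
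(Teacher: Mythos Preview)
Your overall plan matches the paper's: reuse the reductions of \cite{DGM} unchanged, and supply the missing constant-sign property of $v_\eps$ in and near the tube via (i) a resolvent comparison controlling $v_\eps-u_0$ near the mouth, followed by (ii) a Faber--Krahn type contradiction on any putative nodal domain. But there is a genuine gap in your implementation of (i). You assert ``$u_0(0_{\R^n})\neq 0$ by Assumption \eqref{hyp4}'' and then claim $|v_\eps|\geq c>0$ on $B(0,\delta)\cap\mathcal C$. This is false: $0\in\partial\mathcal C$, so $u_0(0)=0$ by the Dirichlet condition, and $v_\eps$ vanishes on $\partial\mathcal C\setminus\{|x|<\eps\}$ as well, so no uniform positive lower bound can hold on $B(0,\delta)\cap\mathcal C$. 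What \eqref{hyp4} actually says is that $0$ is not a limit of \emph{interior} zeros of $u_0$; the quantitative content, via Hopf's lemma, is $\partial_\nu u_0(0)\neq 0$, not $u_0(0)\neq 0$. Your device for disposing of a nodal domain that leaks into the cavity therefore collapses, and an $H^1$ estimate on $v_\eps-u_0$ would in any case be insufficient here ($H^1\not\hookrightarrow C^0$ for $n\geq 2$).

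The paper repairs this as follows. The resolvent comparison is bootstrapped to $C^1$ control, $\sup_{\mathcal C}(|u_0-v_\eps|+|\nabla u_0-\nabla v_\eps|)=\O(\eps^\beta)$ (Proposition~\ref{propu-v}), so that Hopf transfers: $\partial_\nu v_\eps\leq -a/4$ on $\partial\mathcal C\cap\{|x|\leq\delta\}$, and a Taylor expansion from the boundary gives $v_\eps>0$ only on the \emph{annulus} $\mathcal C\cap\{\delta'\leq|x|\leq\delta\}$ (Lemma~\ref{lem_couronne}), not on the full ball. The eigenvalue contradiction is then run not in the $\eps$-tube but in the fixed cylinder $R_\delta=[-\delta,L+\eps_0]\times D_\delta$: the annulus positivity makes $v_\eps^-\indic_{(\mathcal C\cap\{|x|<\delta\})\cup\mathcal T(\eps)}$ an element of $H^1_0(R_\delta)$, which forces $\lambda(\eps)\gtrsim 1/\delta^2$, a contradiction for $\delta$ small. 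In other words, the leak into the cavity is not excluded but absorbed into a slightly wider cylinder whose first Dirichlet eigenvalue is still as large as one wishes.
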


 \section{Background}
 
 As already observed in \cite{MN1, MN2, DGM}, thanks to \cite{HM} we only need to prove that, for any $\delta >0$, there exists $C_\delta>0$ such that, for all $\varepsilon >0$ small enough, one has,
 \be
 \label{result1}
 |\im \rho(\varepsilon)|\geq \frac1{C_\delta}e^{-2\alpha_0(1+\delta)L/\varepsilon}.
 \ee
 
We denote by $u_\varepsilon$ the resonant function associated with $\rho(\varepsilon)$, that is, the non trivial outgoing (analytic) solution of the equation $-\Delta u_\varepsilon = \rho (\varepsilon) u_\varepsilon$ in $\Omega(\varepsilon)$, such that $u_\varepsilon\left\vert_{\partial\Omega(\varepsilon)}\right. =0$.

The fact that $u_\varepsilon$ is outgoing means that, for all $\mu >0$ small enough,  the function $x\mapsto u_\varepsilon (x+i\mu x)$ (well-defined for $|x|$ large enough) is $L^2$ at infinity.

 Moreover, we normalize
 $u_\varepsilon$  by setting, 
\be
\label{normueps}
\Vert  u_\varepsilon\Vert_{L^2({\mathcal C}(\varepsilon ))} =1.
\ee

Let us observe that all the preliminary results used in \cite{DGM} (in particular those coming from \cite{MN2}) remain valid in our case. They permit to reduce the proof of (\ref{result1}) to the fact that $|u_\varepsilon|$ cannot be exponentially smaller than $e^{-\alpha_0L/\varepsilon}$ inside ${\mathbf E}$. 

More precisely, reasoning by contradiction, if we assume the existence of $\delta_0 >0$ such that, along a sequence $\varepsilon \to 0_+$, one has
\be
\label{absurd}
|\im\rho(\eps)| =\O(e^{-2(\alpha_0+\delta_0)L/\varepsilon}).
\ee 
then, for any neighborhood ${\mathcal U}$ of $M_0$ and any compact set $K\subset \R^n$, there exists $\delta_K >0$ such that,
$$
\Vert u_\varepsilon\Vert_{H^1({\mathbf E}\cap K \backslash {\mathcal U})} =\O(e^{-(\alpha_0+\delta_K)L/\varepsilon}),
$$
uniformly as $\varepsilon \to 0_+$ along the same sequence (see \cite{DGM}, Proposition 3.2, and \cite{MN2}, Proposition 6.1).

After that, by Carleman inequalities (first near $M_0$ in ${\mathbf E}$, then inside the tube, at some fixed distance of $0_{\R^n}$), it was proved in \cite{DGM} that 
for any $r_0\in (0,L)$, there exists $\delta_1>0$ such that, 
 \be
 \label{troppetit}
 \|u_{\eps}\|_{H^1([r_0,L]\times D_{\eps})}=\O(e^{-(\alpha_0r_0+\delta_1)/\eps}),
 \ee
 uniformly for $\eps >0$ small enough (see \cite{DGM}, Proposition 5.1).  
 
 It is important to note that all the arguments leading to these reullts never use the fact that $\lambda_0$ is the lowest eigenvalue of $P_{\mathcal C}$, and thus remain valid in our situation.

This is unfortunately not the case for the last arguments in \cite{DGM} (that is, those of Section 6), where the positivity of the eigenfunctions that are involved is crucial. However, since this positivity is used in a neighborhood of $0_{\R^n}$ only, here we only need to prove that the same property holds in our situation. Namely, if we denote by $v_\varepsilon$ the normalized eigenfunction of $P_{{\mathcal C}(\varepsilon)}$ associated with the eigenvalue $\lambda_\varepsilon$ close to $\lambda_0$, it is sufficient to prove the following proposition:

\begin{proposition}\sl
\label{propdiff}
There exists $\delta >0$ such that, for any $\varepsilon >0$ small enough, $v_\varepsilon$ never vanishes in ${\mathcal T}(\varepsilon) \cup [{\mathcal C(\varepsilon)}\cap \{ |x|\leq \delta\}]$.
\end{proposition}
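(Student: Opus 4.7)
The plan is to prove Proposition~\ref{propdiff} through two independent steps:
\textbf{(A)} non-vanishing of $v_\varepsilon$ in $\mathcal{C}(\varepsilon)\cap\{|x|\le\delta\}$, and
\textbf{(B)} non-vanishing of $v_\varepsilon$ inside the neck $\mathcal{T}(\varepsilon)$.

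For (A), I would compare $v_\varepsilon$ with $u_0$ by means of a resolvent identity based on the abstract separation of the cavity and the neck, in the spirit of \cite{HM}. Using smooth cutoffs localised near the interface $\partial\mathcal{C}\cap\overline{\mathcal{T}(\varepsilon)}$, the resolvent $(P_{\mathcal{C}(\varepsilon)}-z)^{-1}$ is written as the direct sum of $(P_\mathcal{C}-z)^{-1}$ (extended by $0$ to the neck) and a neck resolvent, plus a coupling error of operator norm $o(1)$ as $\varepsilon\to 0_+$; the smallness comes from the shrinking of the cross-section $D_\varepsilon=\varepsilon D_1$ together with the lower bound $\alpha_0^2/\varepsilon^2$ on the first transverse eigenvalue, which controls the trace terms. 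Integrating this identity along a small contour around $\lambda_0$ gives that, up to a global sign, $v_\varepsilon\to u_0$ in $L^2(\mathcal{C})$, and standard elliptic regularity in the interior and at the part of $\partial\mathcal{C}$ located away from the opening upgrades this to $C^1$-convergence on $\mathcal{C}\cap B_{2\delta_0}$ up to the boundary, for $\delta_0>0$ small enough. By Assumption~\eqref{hyp4} and Hopf's boundary point lemma, $u_0$ has a definite sign in $\mathcal{C}\cap B_{2\delta_0}$, with strictly non-degenerate inward normal derivative on $\partial\mathcal{C}\cap B_{2\delta_0}$; the $C^1$-convergence then forces $v_\varepsilon$ to inherit this sign on $\mathcal{C}\cap B_\delta$ for some smaller $\delta>0$ and all $\varepsilon$ small enough. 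The neck-side part $\mathcal{T}(\varepsilon)\cap B_\delta$ is handled by expanding $v_\varepsilon$ in the cross-sectional modes $\varphi_k(x'/\varepsilon)$: the slowest decaying first mode dominates, so $v_\varepsilon$ inherits the sign of its trace on the interface, itself positive by continuity from the cavity side.

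For (B), I would proceed by contradiction, combining the previous step with the volume lower bound on nodal domains proved in Section~5. Suppose $v_\varepsilon(p)=0$ at some $p\in\mathcal{T}(\varepsilon)$; by (A) we may assume $v_\varepsilon>0$ on the nodal domain $D^*$ containing $\mathcal{C}\cap B_\delta$. The point $p$ then lies on the boundary of another nodal domain $D'$ on which $v_\varepsilon<0$. I claim $D'\subset\mathcal{T}(\varepsilon)$: indeed, the only passage between the neck and the cavity inside $\mathcal{C}(\varepsilon)$ is the opening $\partial\mathcal{C}\cap\overline{\mathcal{T}(\varepsilon)}$, which is entirely contained in $B_\delta$ for $\varepsilon$ small; any continuous path in $D'$ reaching the cavity would enter $\mathcal{C}\cap B_\delta$ immediately after crossing this opening, contradicting $v_\varepsilon>0$ there. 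Consequently $|D'|\le|\mathcal{T}(\varepsilon)|=\O(\varepsilon^{n-1})\to 0$. On the other hand, $v_\varepsilon|_{D'}$ has constant sign on $D'$ and vanishes on $\partial D'$, so $|v_\varepsilon|$ is a positive first Dirichlet eigenfunction on $D'$ with eigenvalue $\lambda_\varepsilon$; since $\lambda_\varepsilon\to\lambda_0$ is bounded, the elementary Faber--Krahn-type inequality of Section~5 gives $|D'|\ge c>0$ uniformly in $\varepsilon$, yielding the desired contradiction.

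The main obstacle lies in (A): one must make the convergence $v_\varepsilon\to\pm u_0$ quantitative and uniform up to a fixed neighbourhood of $0_{\R^n}$, which is exactly where the domain is being perturbed. This requires carefully estimating every trace and coupling term in the resolvent comparison in terms of $\varepsilon$, using in a sharp way the exponential decay in the neck governed by $\alpha_0/\varepsilon$. Once (A) is secured, step (B) reduces to the topological argument above combined with the lower bound on nodal domain volumes.
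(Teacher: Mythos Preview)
Your outline tracks the paper's strategy closely, but there is a genuine gap in step~(A). The $C^1$-convergence $v_\varepsilon\to u_0$ on $\overline{\mathcal{C}}$, combined with Hopf's lemma for $u_0$, does \emph{not} yield positivity of $v_\varepsilon$ on the full set $\mathcal{C}\cap B_\delta$. Near the opening $b_\varepsilon=\partial\mathcal{C}\cap\overline{\mathcal{T}(\varepsilon)}$ the function $u_0$ vanishes (it is a Dirichlet eigenfunction on $\mathcal{C}$), so an estimate $|v_\varepsilon-u_0|=O(\varepsilon^\beta)$ only gives $v_\varepsilon(x)\ge a\,d(x,\partial\mathcal{C})-C\varepsilon^\beta$, which says nothing in the strip of width $\sim\varepsilon^\beta$ along $b_\varepsilon$; and on $b_\varepsilon$ itself $v_\varepsilon$ does \emph{not} satisfy a Dirichlet condition, so the Taylor-from-the-boundary argument is unavailable there. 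Your mode-expansion claim for $\mathcal{T}(\varepsilon)\cap B_\delta$ is likewise not substantiated: near $x_1=0$ all transverse modes are of comparable size, and near $\partial D_\varepsilon$ the first mode vanishes, so domination by the first mode does not follow without further work. As a consequence, the containment $D'\subset\mathcal{T}(\varepsilon)$ you invoke in~(B) is unjustified.

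The paper circumvents this precisely at the point where your argument breaks. From $C^1$-convergence and Hopf it proves only the annulus estimate $v_\varepsilon>0$ on $\mathcal{C}\cap\{\delta'\le|x|\le\delta\}$ for any $0<\delta'<\delta$ (Lemma~\ref{lem_couronne}); on that annulus the Dirichlet condition does hold on $\partial\mathcal{C}$, so the boundary Taylor argument is valid. It then treats the whole remaining region $\mathcal{C}_{\delta,\varepsilon}=(\mathcal{C}\cap B_\delta)\cup\mathcal{T}(\varepsilon)$ in one stroke: the annulus positivity makes $w_\varepsilon:=v_\varepsilon^-\,\indic_{\mathcal{C}_{\delta,\varepsilon}}$ an element of $H^1_0$ of the thin cylinder $R_\delta=[-\delta,L+\varepsilon_0]\times D_\delta$, and one computes directly $\int|\nabla w_\varepsilon|^2=\lambda(\varepsilon)\int|w_\varepsilon|^2$. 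Since the first Dirichlet eigenvalue of $R_\delta$ is of order $1/\delta^2$, choosing $\delta$ small contradicts $\lambda(\varepsilon)\to\lambda_0$ unless $w_\varepsilon\equiv0$. Your Faber--Krahn idea in~(B) is the same in spirit and would work too, provided you relax the containment to $D'\subset(\mathcal{C}\cap B_{\delta'})\cup\mathcal{T}(\varepsilon)$ (which the annulus result does give, since this set is separated from the rest of $\mathcal{C}(\varepsilon)$ by the annulus) and then let $\delta'$ be small.
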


\section{Estimates on $u_0-v_\varepsilon$}
\label{diffp}
By assumption (\ref{hyp4}), we already know that $u_0$ keeps a constant sign near $0_{\R^n}$ in $\mathcal C$. In order to prove Proposition \ref{propdiff}, we need sufficiently good estimates on the difference $u_0-v_\varepsilon$ in $\mathcal C$, in particular on its normal derivatives at the boundary $\partial\mathcal C$.

We set,
\be
Z(\eps ):= \overset{\circ}{\widehat{T(\eps)}},
\ee
and,
$$
b_\eps:= \partial{\mathcal C}\cap \overline {Z(\eps)} = \partial Z(\eps) \cap \overline {\mathcal C}.
$$
As in \cite{HM},  we set,
$$
H_D:= P_{\mathcal C}\oplus P_{Z(\eps)}
$$
acting on $L^2({\mathcal C})\oplus L^2(Z(\eps))\simeq L^2({\mathcal C}(\eps))$, with domain $(H^2\cap H^1_0)({\mathcal C})\oplus (H^2\cap H^1_0)(Z(\eps))$. We denote by $R_D(z)$ its resolvent, and by $R_\eps(z)$ the resolvent of $P_{{\mathcal C}(\eps)}$.

Let $\gamma$ be a small enough simple oriented loop in $\C$ around $\lambda_0$. Reasoning as in the proof of \cite[Proposition 4.1]{HM}, we see that, for $z\in \gamma$, we have,
\be
\label{diffres}
R_\eps (z) = R_D(z) + R_\eps (z) T_{\mathcal C}^*B_{\rm int}R_D(z),
\ee

where  $T_{\mathcal C}\, :\, H^1({\mathcal C}) \to L^2(b_\eps)$ is the trace operator on $b_\eps$ from $\mathcal C$, and 
$$
B_{\rm int}\, : \, H^2({\mathcal C})\oplus H^2(Z(\eps))\to L^2(b_\eps)
$$

is defined by,
$$
B_{\rm int}(u_1\oplus u_2) = -T_{\mathcal C}\partial_\nu u_1 + T_Z\partial_\nu u_2
$$

Here, $\partial_\nu$ stands for the outward (from ${\mathcal C}$) normal derivative on $b_\eps$, and $T_Z\, :\, H^1(Z(\eps)) \to L^2(b_\eps)$ is the trace operator on $b_\eps$ from $Z(\eps)$.

Indeed, using that $(H_D-z)R_D(z)=(P_{{\mathcal C}(\eps)}-z)R_\eps(z)=I$ on $L^2({\mathcal C}(\eps))$, and that $R_\eps(z)^* =R_\eps(\overline z)$, taking any $u,v\in L^2({\mathcal C}(\eps))$ we can write,
$$
\la (R_\eps(z)-R_D(z))u,v\ra  = \la H_DR_D(z)u, R_\eps(z)^*v\ra - \la R_D(z)u, P_{{\mathcal C}(\eps)}R_\eps(z)^*v\ra
$$

and thus, by Green formula (and after having divided the integral over ${\mathcal C}(\eps)$ into $\int_{\mathcal C} + \int_{Z(\eps)}$),
$$
\la (R_\eps(z)-R_D(z))u,v\ra  = \int_{b_\eps} \left(T_Z\partial_\nu R_D(z)u.\overline{R_\eps(z)^*v }- T_{\mathcal C}\partial_\nu R_D(z)u.\overline{R_\eps(z)^*v }\right) dS,
$$

that is,
$$
\la (R_\eps(z)-R_D(z))u,v\ra  = \la B_{\rm int} R_D(z)u, T_{\mathcal C}R_\eps(z)^*v \ra_{L^2(b_\eps)},
$$
 and (\ref{diffres}) follows.
 
As a consequence, 
\be
\Vert R_D(z)- R_\eps (z)\Vert_{L^2,L^2} \leq \Vert R_\eps(z)\Vert_{H^{-1},L^2} \Vert T_{\mathcal C}^*\Vert_{L^2, H^{-1}} \Vert B_{\rm int} R_D(z)\Vert_{L^2,L^2}
\ee

(where we have pointed-out the regularity only, since the precise spaces where the various operators act are obvious).

In particular, for $z\in \gamma$, this gives,
\be
\Vert R_D(z)- R_\eps (z)\Vert_{L^2,L^2} =\O( \Vert T_{\mathcal C}^*\Vert_{L^2, H^{-1}} \Vert B_{\rm int}R_D(z)\Vert_{L^2,L^2}),
\ee
uniformly as $\eps\to 0_+$.

Since $T_{\mathcal C}\, : \, H^1({\mathcal C}) \to L^2(b_\eps)$ is uniformly bounded (and the dual of $H^1$ is included in $H^{-1}$), so is $T_{\mathcal C}^*\, : \, L^2(b_\eps)\to H^{-1}({\mathcal C})$. Moreover, by the same arguments as in \cite{HM} (specially the proof of Lemma 4.3), we have,
\be
\label{beta}
\Vert B_{\rm int}R_D(z)\Vert_{L^2,L^2}=\O(\eps^\beta),
\ee
where $\beta =\frac12$ when $n\geq 3$, and $\beta$ is an arbitrary number in $(0,\frac12)$ when $n=2$. This finally gives,
\be
\Vert \frac1{2i\pi}\oint_\gamma (R_D(z)- R_\eps (z)) dz\Vert_{L^2({\mathcal C}(\eps))} =\O (\eps^\beta).
\ee

Therefore, possibly by renormalizing $v_\eps$, we can assume,
\be
v_\eps = \frac1{2i\pi}\oint_\gamma R_\eps (z)\widetilde u_0 dz,
\ee

where $\widetilde u_0\in L^2({\mathcal C}(\eps))$ stands for the 0-extension of $u_0$ outside $\mathcal C$. 
Of course, by construction we also have,
\be
\widetilde u_0 = \frac1{2i\pi}\oint_\gamma R_D (z)\widetilde u_0 dz
\ee

In particular, we immediately obtain,
\be
\label{u-v}
\Vert \widetilde u_0-v_\eps\Vert_{L^2({\mathcal C}(\eps))} =\O(\eps^\beta),
\ee
and, by \cite[Proposition 4.1]{HM}, we also have,
\be
\label{l0-leps}
\lambda_0-\lambda_\eps =\O(\eps^\beta).
\ee

Then, using that $-\Delta u_0=\lambda_0u_0$ and $-\Delta v_\eps =\lambda_\eps v_\eps$ in $\mathcal C$, we deduce from (\ref{u-v})-(\ref{l0-leps}),
$$
\Vert \Delta(u_0-v_\eps)\Vert_{L^2({\mathcal C})} =\O(\eps^\beta),
$$

and, by iteration, for any $m\geq 0$,
\be
\Vert \Delta^m(u_0-v_\eps)\Vert_{L^2({\mathcal C})} =\O(\eps^\beta).
\ee

Finally, we can apply  e.g. Theorem 5 at page 323 of \cite{Ev} to conclude that, for any $m\geq 0$,
\be
\Vert u_0-v_\eps\Vert_{H^m({\mathcal C})} =\O(\eps^\beta).
\ee

Therefore, by standard Sobolev inequalities and the continuity of the trace operator on $\partial\mathcal C$, we conclude,
\begin{proposition}\sl 
\label{propu-v}
One has,
$$
\sup_{\mathcal C} |u_0-v_\eps|+\sup_{\mathcal C} |\nabla u_0-	\nabla v_\eps| =\O(\eps^\beta),
$$
uniformly as $\eps\to 0_+$.
\end{proposition}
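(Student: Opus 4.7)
My plan is to promote the $H^m(\mathcal C)$ estimate established just above into the uniform $C^1$ bound asserted in the proposition, via a single application of Sobolev embedding.

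The key input is the bound
$$\Vert u_0-v_\eps\Vert_{H^m(\mathcal C)} = \O(\eps^\beta)$$
valid for every integer $m\geq 0$, which was derived by bootstrapping the $L^2$-resolvent-difference estimate $\Vert R_D(z)-R_\eps(z)\Vert_{L^2,L^2}=\O(\eps^\beta)$ on the contour $\gamma$ through the eigenvalue equations $-\Delta u_0 = \lambda_0 u_0$ and $-\Delta v_\eps = \lambda_\eps v_\eps$, and then invoking global elliptic regularity on the smooth domain $\mathcal C$.

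First I would fix an integer $m$ with $m>\frac{n}{2}+1$. Since $\partial\mathcal C$ is of class $C^\infty$ by hypothesis (\ref{hyp1}), the Sobolev embedding $H^m(\mathcal C)\hookrightarrow C^1(\overline{\mathcal C})$ holds with some continuity constant $C_m>0$ independent of $\eps$. Composing with the $H^m$-estimate above immediately yields
$$\sup_{\mathcal C}|u_0-v_\eps|+\sup_{\mathcal C}|\nabla u_0-\nabla v_\eps| \leq C_m \Vert u_0-v_\eps\Vert_{H^m(\mathcal C)} = \O(\eps^\beta),$$
which is exactly the claim; the trace-continuity statement from $\partial\mathcal C$ can be read off by composing further with the bounded trace map $H^m(\mathcal C)\to H^{m-\frac12}(\partial\mathcal C)$.

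The genuinely substantive work lies in the preceding derivation, in particular the resolvent identity (\ref{diffres}) obtained from Green's formula on $\mathcal C$ and $Z(\eps)$, together with the boundary-derivative estimate (\ref{beta}) on the thin interface $b_\eps$. Given those ingredients, the final step of the proposition itself is essentially free from obstruction: Sobolev embedding on a fixed $C^\infty$ domain does not see the small parameter $\eps$, so the $\O(\eps^\beta)$ rate transfers directly from $H^m$ to $C^1$.
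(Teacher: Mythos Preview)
Your proposal is correct and matches the paper's own argument essentially verbatim: after establishing $\Vert u_0-v_\eps\Vert_{H^m(\mathcal C)}=\O(\eps^\beta)$ for all $m$, the paper likewise concludes the proposition in one line ``by standard Sobolev inequalities and the continuity of the trace operator on $\partial\mathcal C$''. Your choice of $m>\tfrac{n}{2}+1$ and the observation that the embedding constant is $\eps$-independent are exactly the points needed, and your remark that all the real work lies in the preceding resolvent and bootstrap estimates is accurate.
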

\begin{remark}\sl
This result can be found in \cite{Ev}, and it uses the smoothness of $\partial\mathcal C$. However, since we will use the estimate of Proposition \ref{propu-v} near $0_{\R^n}$ only, by standard arguments of localisation, it is possible to restrict the smoothness of $\partial\mathcal C$ near that point only. As a consequence, one could obtain a generalisation of our final result to cavities that are not necessarily smooth away from $0_{\R^n}$.
\end{remark}

\section{Positivity of the eigenfunction for small $\varepsilon$} 
\label{section5}
In this section we conclude the proof of Proposition \ref{propdiff}. 
Note that by our assumptions, $u_0$ as a constant sign in the intersection of $\mathcal{C}$ and a neighborhood of the origin. To fix ideas, we will assume that $u_0$ is positive there. By Hopf lemma (see, e.g., \cite{GiTr}), this implies,
\be
\label{hopf}
\partial_\nu u_0(0)<0.
\ee
We start by proving the following:
\begin{lemma}
\label{lem_couronne}
 \label{Lueps_positif}
Assume \eqref{hyp1}, \eqref{hyp2}, \eqref{hyp3}, \eqref{hyp4}.
Then, there exists $\delta>0$ such that, for any $\delta'\in (0,\delta)$ and for all $\eps >0$ small enough, one has,
\begin{equation}
 \label{ueps_positif}
v_{\eps}>0\, \mbox{ on } \mathcal{C}\cap\{ \delta'\leq |x|\leq \delta\}.
\end{equation} 
\end{lemma}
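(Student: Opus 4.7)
My plan is to use the uniform $C^1$-closeness of $v_\eps$ to $u_0$ on $\mathcal{C}$ furnished by Proposition \ref{propu-v}, combined with the Hopf bound \eqref{hopf}. Heuristically, $u_0(x)$ is comparable to $(-\partial_\nu u_0(x_b))\,\mathrm{dist}(x,\partial\mathcal{C})$ near $\partial\mathcal{C}\cap B(0,\delta)$, where $x_b$ denotes the nearest boundary point; this strictly positive profile cannot be killed by an $O(\eps^\beta)$ perturbation as long as one stays at a fixed positive distance from the origin.

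More concretely, using \eqref{hyp4}, continuity of $u_0$ and $\nabla u_0$ up to the boundary, and \eqref{hopf}, I first choose $\delta>0$ small enough that $u_0>0$ on $\mathcal{C}\cap B(0,2\delta)$, that $\partial\mathcal{C}\cap \overline{B(0,2\delta)}$ is a smooth piece of $\partial\mathcal{C}$ on which $u_0=0$, and that $-\partial_\nu u_0\geq c_0>0$ uniformly there. Given $\delta'\in(0,\delta)$, for a small $\eta\in(0,\delta'/2)$ to be fixed below, I split
\[
A:=\mathcal{C}\cap\{\delta'\leq|x|\leq \delta\}=K_\eta\cup N_\eta,
\]
where $K_\eta:=\{x\in A:\mathrm{dist}(x,\partial\mathcal{C})\geq \eta\}$ and $N_\eta:=A\setminus K_\eta$. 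On the compact set $K_\eta$ the function $u_0$ has a positive minimum $m_\eta$, and Proposition \ref{propu-v} immediately gives $v_\eps\geq u_0-O(\eps^\beta)\geq m_\eta/2>0$ on $K_\eta$ for $\eps$ small enough.

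On $N_\eta$, after shrinking $\eta$ so that the nearest-point projection onto $\partial\mathcal{C}\cap B(0,2\delta)$ is a well-defined smooth map, I write any $x\in N_\eta$ as $x=x_b-t\nu(x_b)$, with $x_b\in\partial\mathcal{C}$ and $0<t=\mathrm{dist}(x,\partial\mathcal{C})<\eta$. Since $|x_b|\geq|x|-\eta\geq \delta'/2$, and since the tube-attachment set $b_\eps$ is contained in a ball of radius $O(\eps)$ around the origin, for $\eps$ small we have $x_b\notin b_\eps$, hence $x_b\in\partial\mathcal{C}(\eps)$ and $v_\eps(x_b)=0$. The fundamental theorem of calculus then yields
\[
v_\eps(x)=-\int_0^t \partial_\nu v_\eps(x_b-s\nu(x_b))\,ds,
\]
and the uniform continuity of $\nabla u_0$ on the relevant tubular neighborhood, combined with Proposition \ref{propu-v}, shows that $-\partial_\nu v_\eps\geq c_0/4$ all along the segment, provided $\eta$ and $\eps$ are small enough. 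Consequently $v_\eps(x)\geq (c_0/4)\,t>0$, and combining this with the estimate on $K_\eta$ yields \eqref{ueps_positif}.

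The argument is essentially a perturbative boundary-layer computation and Proposition \ref{propu-v} does almost all the work. The one genuinely delicate point I foresee is making sure that the foot $x_b$ of every $x\in N_\eta$ lies on the smooth untouched part of $\partial\mathcal{C}$, safely away from the attachment set $b_\eps$; this is arranged by fixing $\delta$ and $\delta'$ first, then the layer width $\eta\leq \delta'/2$, and only then taking $\eps$ so small that $b_\eps\subset B(0,\delta'/2)$.
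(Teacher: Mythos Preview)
Your proof is correct and follows essentially the same approach as the paper's. Both arguments split the annular region into an interior part, where positivity of $v_\eps$ follows from the positive minimum of $u_0$ and Proposition~\ref{propu-v}, and a boundary layer, where one integrates $-\partial_\nu v_\eps$ from the boundary (what the paper phrases as ``performing a Taylor expansion from the boundary of $\mathcal C$'') after noting that the relevant boundary points lie outside $b_\eps$; your introduction of the explicit layer width $\eta\le\delta'/2$ and the nearest-point projection simply makes the paper's terse argument more explicit.
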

\begin{proof} We set $a:= -\partial_\nu u_0(0)$. By \eqref{hopf} we have $a>0$, and by the continuity of $\partial_\nu u_0$ on $\partial\mathcal C$ near 0, there exists $\delta >0$ such that $\partial_\nu u_0(x)\leq -a/2$ on $\{|x|\leq \delta\}\cap \partial\mathcal C$,
 and $u_0>0$ on $\{|x|\leq \delta\}\cap \mathcal C$. In particular, for any $c>0$, one has,
$$
\min\{ u_0(x)\, ;\, x\in {\mathcal C},\, |x|\leq \delta,\, d(x, \partial{\mathcal C})\geq c\} >0.
$$
Using Proposition \ref{propu-v}, we deduce that, for all $\eps$ small enough, we have,
$$
\begin{aligned}
&\min\{ v_\eps(x)\, ;\, x\in {\mathcal C},\, |x|\leq \delta,\, d(x, \partial{\mathcal C})\geq c\} >0;\\
&\partial_\nu v_\eps (x)\leq -a/4 \mbox{ on }\{|x|\leq \delta\}\cap \partial\mathcal C.
\end{aligned}
$$
As a consequence, and since $v_\eps = 0$ on $\partial{\mathcal C} \backslash \{ |x|\leq \eps\}$, by performing a Taylor expansion from the boundary of $\mathcal C$, we conclude that, for any $\delta'\in (0,\delta)$, there exists $c'>0$ such that,
$$
v_\eps (x) \geq c'd(x,\partial {\mathcal C}) \mbox{ on } \mathcal C\cap \{\delta'\leq |x|\leq \delta\},
$$
and the lemma is proved.
\end{proof}

We next prove Proposition \ref{propdiff}. 

We let $\delta>0$ small enough such that \eqref{ueps_positif} holds, and fix $\delta'\in (0,\delta)$. 
We also fix $\eps>0$ small enough.

We extend $v_{\eps}$ by zero to a compactly supported $H^1$ function on $\R^n$. We will still denote the extension by $v_{\eps}$. 
We consider the negative part of $v_{\eps}$, namely
$$ v_{{\eps}}^-=-\min(v_{\eps},0)=\frac{1}{2}(|v|_{\eps}-v_{\eps}).$$
By \cite[Corollary 6.18]{LiebLoss01}, $v_{\eps}^-\in H^1(\R^n)$ with 
$\nabla v_{\eps}^-(x)=-\nabla v_{\eps}(x)$ if $v_{\eps}(x)<0$, and $\nabla v_{\eps}^-(x)=0$ if $v_{\eps}(x)\geq 0$.

We set
$$ {\mathcal C}_{\delta,\eps}:=\left(\mathcal{C}\cap \{|x|<\delta\}\right)\cup \mathcal{T}(\eps),$$
and 
$$
w_{\eps}=v_{\eps}^-\indic_{ {\mathcal C}_{\delta,\eps}}.
$$

Since, by Lemma \ref{lem_couronne}, we have $v_{\eps}^-(x)=0$ if $\delta'<|x|<\delta$, we see that $w_{\eps}$ is an element of $H^1(\R^n)$ whose support is included (for small $\eps$) in $R_{\delta}$,
where $R_{\delta}$ is the cylinder
$$R_{\delta}=[-\delta,L+\eps_0]\times D_{\delta}.$$
We must prove that $w_{\eps}=0$ almost everywhere. We argue by contradiction, assuming that it is not the case. As a consequence
\begin{equation}
 \label{weps_above}
\frac{1}{\int |w_{\eps}|^2}\int |\nabla w_{\eps}|^2\geq \min_{f\in H^1_0(R_{\delta})}\frac{1}{\int |f|^2}\int |\nabla f|^2\gtrsim \frac{1}{\delta^2}+\frac{1}{L^2},
\end{equation}
where, to obtain the last bound, we have observed that the first Dirichlet eigenvalue on $R_{\delta}$ is the sum of the first Dirichlet eigenvalue on $D_{\delta}$ (of order $1/\delta^2$ by elementary scaling consideration) and the first Dirichlet eigenvalue on $[-\delta,L+\eps_0]$ (that is, $\pi/(L+\eps_0+\delta)$).

On the other hand, since $w_\eps = -v_\eps$ on Supp $w_\eps$, we have,
$$ \int |\nabla w_{\eps}|^2=-\int \nabla w_{\eps}\cdot \nabla v_{\eps}=\int w_{\eps}\Delta v_{\eps}=\lambda(\eps)\int |w_{\eps}|^2.$$
In view of \eqref{weps_above} we deduce the bound
$$ \lambda(\eps)\gtrsim \frac{1}{\delta^2}+\frac{1}{L^2},$$
which is a contradiction since $\lambda(\eps)$ converges to $\lambda_0$ as $\eps$ goes to $0$, and $\delta>0$ is any small number satisfying \eqref{ueps_positif}, thus $\delta$ can be chosen arbitrarily small by our assumptions.

\section{Long time behaviour for the wave equation} 

In this section, we investigate the asymptotic behaviour of the solution $w=w(t,x)$ to,
\be
\label{waveeq}
\begin{aligned}
& \frac{\partial^2w}{\partial t^2} - \Delta w =0\,\, \mbox{ on } {\mathcal C}(\varepsilon); \\
& w\left|_{x\in \partial{\mathcal C}(\eps)}\right. =0;\\
& w\left|_{t=0} = f_0\right. \quad ; \quad \frac{\partial w}{\partial t}\left|_{t=0} = f_1\right. ,
\end{aligned}
\ee
both as $\eps\to 0_+$ and $t\to +\infty$. Here $f_0$ is in the domain of $P_\eps$, and $f_1$ in that of $P_\eps^{1/2}$.

As it is well known, for all $t\in \R$, $w_t(x):=w(t,x)$ is abstractly given by,
$$
w_t = \cos \left (t\sqrt{P_\eps}\right)f_0 + \sin \left (t\sqrt{P_\eps}\right) P_\eps^{-1/2}f_1,
$$

that is, using Stone's formula,
\be
\label{stone}
w_t =\frac{1}{2i\pi} \int_0^{+\infty} \left(\cos \left(t\sqrt{\lambda}\right)E(\lambda)f_0 + \lambda^{-1/2}\sin \left(t\sqrt{\lambda}\right) E(\lambda)f_1\right)d\lambda
\ee
with
$$
E(\lambda):= R_\eps(\lambda+i0)-R_\eps(\lambda-i0)\quad ; \quad R_\eps(z):=(P_\eps-z)^{-1}.
$$
We assume that $f_0$ and $f_1$ are localized in energy near $\lambda_0$, in the sense that there exists $\psi\in C_0^\infty(\R;[0,1])$ such that,
\begin{itemize}
\item $\psi =1$ near $\lambda_0$;
\item ${\rm Supp}\psi \subset (0,+\infty)$;
\item $\left( {\rm Sp}(P_{\mathcal C})\backslash \{\lambda_0\}\right) \cap {\rm Supp}\psi =\emptyset$;
\item $f_j=\psi (P_\eps )f_j$ for $j=0,1$.
\end{itemize}
For simplicity, we also assume that $f_0$ and $f_1$ are compactly supported (though this could be replaced by taking them in some vector space of decaying analytic functions at infinity). Thus, by taking $\chi \in C_0^\infty(\R^n;[0,1])$ such that $\chi =1$ in a sufficiently large compact set, we can assume that $\chi f_j =f_j$ ($j=0,1$).
In that case, (\ref{stone}) becomes
$$
w_t =\frac{1}{2i\pi} \int_0^{+\infty} \psi (\lambda)\left(\cos \left(t\sqrt{\lambda}\right)E(\lambda)\chi f_0 + \lambda^{-1/2}\sin \left(t\sqrt{\lambda}\right) E(\lambda)\chi f_1\right)d\lambda
$$
and in order to be able to modify the domain of integration  into the complex (as, e.g., in \cite{BuZw} or \cite{BrMa}), we concentrate on the asymptotic behaviour of $\chi w_t$. It is well known that both $R^\chi_+(\lambda) :=\chi R_\eps(\lambda +i0)\chi$ and $R^\chi_-(\lambda):=\chi R_\eps(\lambda -i0)\chi$ extend to complex values of $\lambda$ as meromorphic functions, and the  poles of $R^\chi_{\pm}(\lambda)$ are in $\pm\im\lambda <0$. These poles are, respectively, the resonances of $P_\eps$ and their conjugates (of course, all this can be seen by using an analytic distortion away from the support of $\chi$: see, e.g. \cite{SjZw}).

Then, the asymptotic behaviour of $\chi w_t$ will result from that of the two operators,
\be
\label{AB}
\begin{aligned}
&A_\eps (t):=\chi \psi (P_\eps) \cos \left (t\sqrt{P_\eps}\right)\chi=\frac{1}{2i\pi} \int_0^{+\infty} \psi (\lambda)\cos( t\sqrt{\lambda})\left(R^\chi_+(\lambda)-R_-^\chi(\lambda)\right)d\lambda;\\
&B_\eps (t):=\chi\psi (P_\eps) \frac{\sin \left (t\sqrt{P_\eps}\right)}{\sqrt{ P_\eps}}\chi=\frac{1}{2i\pi} \int_0^{+\infty} \psi (\lambda)\frac{\sin (t\sqrt{\lambda})}{\sqrt\lambda}\left(R^\chi_+(\lambda)-R_-^\chi(\lambda)\right)d\lambda.\\
\end{aligned}
\ee

As before, we denote by  $u_\eps$ the resonant state of $P_\eps$ associated with $\rho (\eps)$, normalised by \eqref{normueps}. Then, using an analytic distortion, it is possible to define $\alpha_\eps:= \int_{\Omega(\eps)} u_\eps^2(x)dx \in \C$, and, by the results of \cite[Section 6]{HM}, we know that $\alpha_\eps = 1+\O(e^{-(\alpha_0L-\delta)/\eps})$ for any $\delta >0$. 

For $f\in L^2(\Omega (\eps))$, we set,
\be
\label{Pieps}
\Pi^\chi_\eps f := \alpha_\eps^{-1}\la  f, \chi \overline u_\eps\ra \chi u_\eps,
\ee
and we denote by ${\mathbf 1}_{\mathbf E}\, : L^2(\Omega (\eps)) \to L^2({\mathbf E})$ the operator of restriction to $\mathbf E$, and by ${\mathbf 1}_{\mathbf E}^*$ its adjoint, that is, the operator of extension by 0 outside $\mathbf E$.

We have,
\begin{theorem}\sl
\label{propevol}
Under Assumptions \eqref{hyp1}, \eqref{hyp2}, \eqref{hyp3}, \eqref{hyp4},  the operators $A_\eps(t)$ and $B_\eps (t)$ defined in (\ref{AB}) satisfy,
$$
\begin{aligned}
A_\eps (t)& = \re\left[ e^{-it\sqrt{\rho(\eps)}}\Pi^\chi_\eps\right]+  \chi {\mathbf 1}_{\mathbf E}^*\psi (P_{\mathbf E})\cos \left( t\sqrt {P_{\mathbf E}}\right){\mathbf 1}_{\mathbf E} \chi  +\O(\eps^\beta t^{-\infty}) \\
& = \re\left[ e^{-it\sqrt{\rho(\eps)}}\Pi^\chi_\eps\right]+\O(t^{-\infty}) ;\\
B_\eps (t)& =-\im\left[ \frac{e^{-it\sqrt{\rho(\eps)}}}{\sqrt{\rho(\eps)}}\Pi^\chi_\eps\right]+\chi {\mathbf 1}_{\mathbf E}^*\psi (P_{\mathbf E})\frac{\sin \left (t\sqrt{P_{\mathbf E}}\right) }{\sqrt {P_{\mathbf E}}}{\mathbf 1}_{\mathbf E}\chi + \O(\eps^\beta t^{-\infty})\\
& = -\im\left[ \frac{e^{-it\sqrt{\rho(\eps)}}}{\sqrt{\rho(\eps)}}\Pi^\chi_\eps\right]+ \O(t^{-\infty}),
\end{aligned}
$$
in ${\mathcal L}(L^2(\Omega_\eps))$, uniformly as $t\to +\infty$ and $\eps\to 0_+$. 
Here $\beta$ is as in \eqref{beta}, and, for any operator $C$, $\re C:= \frac12 (C+C^*)$  stands for the real part of $C$, while $\im C:= \frac1{2i}(C-C^*)$ stands for its imaginary part.
\end{theorem}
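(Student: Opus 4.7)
The strategy is to apply Stone's formula (\ref{stone}) and use the meromorphic continuation of the cutoff resolvents $R^\chi_\pm(\lambda)$ to deform the integration contour into the complex plane, capturing $\rho(\eps)$ (and its conjugate) as residues, and then to compare the remaining integral with the analogous quantity for the exterior Dirichlet Laplacian $P_{\mathbf E}$. I treat $A_\eps(t)$ in detail; the $B_\eps(t)$ argument is parallel upon replacing $\cos(t\sqrt\lambda)$ by $\sin(t\sqrt\lambda)/\sqrt\lambda$.

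First, write $\cos(t\sqrt\lambda)=\tfrac12(e^{it\sqrt\lambda}+e^{-it\sqrt\lambda})$. The factor $e^{-it\sqrt\lambda}$ is bounded for $\im\lambda\le 0$, so the integral pairing $R_+^\chi$ with this term may be deformed to a contour $\gamma_+$ in the lower half $\lambda$-plane; symmetrically, the $R_-^\chi$-contribution together with $e^{it\sqrt\lambda}$ is deformed to a contour $\gamma_-$ in the upper half-plane. Provided ${\rm Supp}\,\psi$ has been shrunk around $\lambda_0$, the only poles crossed in this deformation are $\rho(\eps)$ and $\overline{\rho(\eps)}$. The distorted operator has $u_\eps$ as a (bi)eigenvector at the simple eigenvalue $\rho(\eps)$ with bilinear pairing $\alpha_\eps$ (see \cite[Section 6]{HM}), so the residue of $\chi R_\eps(z)\chi$ at $z=\rho(\eps)$ equals $-\Pi^\chi_\eps$. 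Summing the two residue contributions, and using $\psi\equiv 1$ near $\lambda_0$, produces exactly $\re[e^{-it\sqrt{\rho(\eps)}}\Pi^\chi_\eps]$ (and the analogous $-\im$-variant for $B_\eps(t)$).

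The integrals along $\gamma_\pm$ that remain after extracting the residues are then handled by comparing $P_\eps$ with $P_{\mathbf E}$. On the deformed contour, which stays at uniformly positive distance from the spectrum of $P_{\mathbf E}$, the claim is that
\[
\chi R_\eps(z)\chi - \chi {\mathbf 1}_{\mathbf E}^* R_{\mathbf E}(z) {\mathbf 1}_{\mathbf E}\chi = \O(\eps^\beta),
\]
uniformly in $z\in\gamma_\pm$. This is obtained by adapting the resolvent identity (\ref{diffres}) to the pair $(P_\eps, P_{\mathbf E})$: the cavity part $P_{\mathcal C}$ is absorbed into an $\O(1)$ operator killed by appropriate cutoffs, and the only surviving contribution is a boundary term on $b_\eps$ controlled by (\ref{beta}). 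Substituting this in, and closing $\gamma_\pm$ back onto $(0,+\infty)$ for the piece involving $P_{\mathbf E}$, reproduces $\chi {\mathbf 1}_{\mathbf E}^*\psi(P_{\mathbf E})\cos(t\sqrt{P_{\mathbf E}}){\mathbf 1}_{\mathbf E}\chi$ by Stone's formula applied to $P_{\mathbf E}$. The analyticity of the integrand on $\gamma_\pm$ and the decay of $e^{\pm it\sqrt\lambda}$ off the real axis permit repeated integration by parts in $\lambda$, upgrading the error to $\O(\eps^\beta t^{-\infty})$.

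Finally, applying the same contour deformation directly to the operator $\chi {\mathbf 1}_{\mathbf E}^*\psi(P_{\mathbf E})\cos(t\sqrt{P_{\mathbf E}}){\mathbf 1}_{\mathbf E}\chi$ produces no residues, since $P_{\mathbf E}$ has no bound state in ${\rm Supp}\,\psi$; the same integration by parts then shows that this term is itself $\O(t^{-\infty})$ in $\mathcal{L}(L^2)$, which is the standard local energy decay for the wave equation outside a smooth obstacle and gives the second, sharper form of the asymptotics. The main obstacle will be the uniform $\O(\eps^\beta)$ control of the resolvent difference along the deformed contour: this requires extending the Section \ref{diffp} estimates from the small loop $\gamma$ around $\lambda_0$ to the elongated paths $\gamma_\pm$, with explicit control of $\|B_{\rm int}R_D(z)\|$ as $z$ moves. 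A secondary, essentially bookkeeping, issue is the consistent choice of the branch of $\sqrt\lambda$ when the contour is deformed across $(0,+\infty)$.
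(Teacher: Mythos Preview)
Your overall strategy matches the paper's: Stone's formula, contour deformation to extract the residue at $\rho(\eps)$, resolvent comparison to isolate the exterior contribution, and integration by parts for the $t^{-\infty}$ decay. There is, however, a genuine gap in the resolvent comparison step.

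You claim that $\chi R_\eps(z)\chi - \chi {\mathbf 1}_{\mathbf E}^* R_{\mathbf E}(z) {\mathbf 1}_{\mathbf E}\chi = \O(\eps^\beta)$ uniformly on the deformed contours. This is false: the approximation of $R_\eps^\chi(z)$ furnished by \cite{HM} is $R_{\mathcal C}(z)\oplus R_Z(z)\oplus \chi R_{\mathbf E}(z)\chi$, and the cavity piece $R_{\mathcal C}(z)$ is $\O(1)$ (not $\O(\eps^\beta)$) on any contour at fixed distance from $\lambda_0$. Since $\chi\equiv 1$ on the cavity, it cannot ``kill'' this contribution; the sentence ``the cavity part $P_{\mathcal C}$ is absorbed into an $\O(1)$ operator killed by appropriate cutoffs'' is unjustified.

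The paper avoids this by a different organisation. It treats $\chi\psi(P_\eps)e^{-it\sqrt{P_\eps}}\chi$ directly (taking real and imaginary parts only at the end), deforms $R_+^\chi$ to a contour $\gamma_+$ in the \emph{upper} half-plane and $R_-^\chi$ to $\gamma_-$ in the \emph{lower} half-plane (so no residues are crossed), and then rearranges the two integrals into a closed loop around $\lambda_0$ (producing the residue $\Pi_\eps^\chi$ at $\rho(\eps)$) plus a remainder $Q$ that is an integral over $\gamma_-$ of the \emph{jump} $R_+^\chi-R_-^\chi$. In this jump the single-valued pieces $R_{\mathcal C}$ and $R_Z$ cancel, giving
\[
R_+^\chi-R_-^\chi \;=\; \chi{\mathbf 1}_{\mathbf E}^*\bigl(R_{\mathbf E}^+-R_{\mathbf E}^-\bigr){\mathbf 1}_{\mathbf E}\chi \;+\; \O(\eps^\beta),
\]
which is the correct $\O(\eps^\beta)$ statement. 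Your argument can be repaired along the same lines: if you deform \emph{all} terms carrying $e^{-it\sqrt\lambda}$ (both the $R_+^\chi$ and the $-R_-^\chi$ contributions) to the same lower contour, the jump $R_+^\chi-R_-^\chi$ reappears there and the cavity and tube resolvents cancel. As written, though, your estimate on the individual resolvent does not hold. The ``main obstacle'' you flag (extending the Section~\ref{diffp} bounds to the elongated paths $\gamma_\pm$) is in fact routine once one is on a contour at fixed distance from $\sigma(P_{\mathcal C})$; the real issue is the one above.
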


\begin{remark} \sl
One could also slightly modify the normalisation of the resonant state by setting,
\be
w_\eps := \alpha_\eps^{-\frac12} u_\eps,
\ee
so that $\int_{\Omega(\eps)} w_\eps^2(x)dx=1$, and $\Pi^\chi_\eps$ takes the more usual form $\Pi^\chi_\eps f := \la  f, \chi \overline w_\eps\ra \chi w_\eps$.
\end{remark}

\begin{proof} Let $\gamma_\pm \subset \{\pm \im z\geq 0\}$ be two complex paths parametrized (and oriented) by $\R$ such that,
\begin{itemize}
\item $\gamma_\pm (s) =s$ for $s\notin \{\psi = 1\}$;
\item $\re \gamma_\pm (s) =s$ for all $s\in\R$;
\item $\pm \im\gamma_\pm (\lambda_0) >0$.
\end{itemize}

Then, performing as in \cite{BuZw,BrMa} a change of complex contour of integration in the region where $\psi =1$, we can write,
$$
\begin{aligned}
\chi \psi (P_\eps)e^{-it\sqrt {P_\eps}}\chi  =\frac{1}{2i\pi}\oint_{\gamma_+} \psi(\re\lambda)&e^{-it\sqrt{\lambda}} R_+^\chi (\lambda)d\lambda\\
& -\frac{1}{2i\pi}\oint_{\gamma_-} \psi(\re\lambda)e^{-it\sqrt{\lambda}}R_-^\chi (\lambda)d\lambda,
\end{aligned}
$$
and thus,
 \be
 \label{eqQ}
\chi \psi (P_\eps)e^{-it\sqrt {P_\eps}}\chi  =\frac{-1}{2i\pi}\oint_\gamma e^{-it\sqrt{\lambda}}R_+^\chi (\lambda)d\lambda + Q,
\ee
where $\gamma$ is a simple oriented complex loop around $\lambda_0$, and $Q$ is given by,
\be
\label{T+}
Q:= \frac1{2i\pi}\int_{\gamma_-}e^{-it\sqrt{\lambda}}\psi(\re\lambda)\left( R_+^\chi (\lambda)-R_-^\chi (\lambda)\right)d\lambda.
\ee
Then, using that, for all $N$, $e^{-it\sqrt{\lambda}} =(1+t)^{-N}\left( 1+2i\sqrt{\lambda}\frac{d}{d\lambda}\right)^N e^{-it\sqrt{\lambda}} $, and that $R_+^\chi (\lambda)$ (and thus also its derivatives with respect to $\lambda$) is uniformly bounded on $\gamma_-\cap \{\re\lambda\in {\rm Supp}\psi\}$, by integrations by parts we immediately obtain,
\be
\label{estQ}
\Vert Q\Vert =\O(t^{-\infty}),
\ee
uniformly when $t\to +\infty$ and $\eps\to 0_+$. Moreover, by the same arguments as in \cite[Section 4]{HM}, we see that we can approach $R_\pm^\chi(\lambda)$ by $R_{\mathcal C}(\lambda)\oplus R_Z(\lambda)\oplus \chi R^\pm_{\mathbf E}(\lambda)\chi $ (where we have denoted by  $R_W(\lambda)=(P_W-z)^{-1}$ the resolvent of the Dirichlet Laplacian on $W$) up to $\O(\eps^\beta)$. Here,
 $\chi R^\pm_{\mathbf E}(\lambda)\chi $ stands for the meromorphic extension of $\chi R_{\mathbf E}(\lambda)\chi $ from $\{ \pm \im \lambda >0\}$, and we observe  that $R_{\mathcal C}(\lambda)$ is meromorphic in a whole neighbourhood of $\lambda_0$, while $R_Z(\lambda)$ is holomorphic near $\lambda_0$. Inserting this approximation into \eqref{T+}, and taking advantage of the fact that $P_{\mathbf E}$ has no resonances near $\lambda_0$ to modify again the contour $\gamma_-$ into $\R_+$, we obtain,
$$
\begin{aligned}
Q= \frac1{2i\pi}\int_0^{+\infty}e^{-it\sqrt{\lambda}}\psi(\lambda)\chi {\mathbf 1}_{\mathbf E}^*\left( R_{\mathbf E}(\lambda +i0)-R_{\mathbf E}(\lambda -i0)\right)&{\mathbf 1}_{\mathbf E}\chi d\lambda \\
& +\O(\eps^\beta t^{-\infty}),
\end{aligned}
$$
that is,
\be
\label{Q}
Q= \chi {\mathbf 1}_{\mathbf E}^*e^{-it\sqrt{P_{\mathbf E}}}\psi(P_{\mathbf E}){\mathbf 1}_{\mathbf E}\chi +\O(\eps^\beta t^{-\infty}).
\ee

On the other hand, since $\rho(\eps)$ is the only pole of $R_+^\chi$ surrounded by $\gamma$, we have,
$$
\frac{-1}{2i\pi}\oint_\gamma e^{-it\sqrt{\lambda}}R_+^\chi (\lambda)d\lambda =-e^{-it\sqrt {\rho(\eps)}}{\rm res}\left(R_+^\chi ,\rho(\eps)\right),
$$
where ${\rm res}\left(R_+^\chi ,\rho(\eps)\right)$ is the residue of $R_+^\chi$ at $\rho(\eps)$. Standard arguments (involving for instance the complex distorsion as in \cite{HM, SjZw}, or directly working in modified spaces adapted to resonances as in \cite{HS}) show that, in our case, we have,
\be
\label{resR}
{\rm res}\left(R_+^\chi ,\rho(\eps)\right)=-\Pi^\chi_\eps,
\ee
where $\Pi^\chi_\eps$ is defined in \eqref{Pieps}. 

Therefore, gathering \eqref{eqQ}-\eqref{resR},  we have proved,
$$
\begin{aligned}
\chi \psi (P_\eps)e^{-it\sqrt {P_\eps}}\chi  & =e^{-it\sqrt {\rho(\eps)}}\Pi^\chi_\eps + \chi {\mathbf 1}_{\mathbf E}^*e^{-it\sqrt{P_{\mathbf E}}}\psi(P_{\mathbf E}){\mathbf 1}_{\mathbf E}\chi +\O(\eps^\beta t^{-\infty})\\
& =e^{-it\sqrt {\rho(\eps)}}\Pi^\chi_\eps + \O( t^{-\infty}).
\end{aligned}
$$

In the same way, we also have,
$$
\begin{aligned}
\chi \psi (P_\eps)\frac{e^{-it\sqrt {P_\eps}}}{\sqrt {P_\eps}}\chi &=\frac{e^{-it\sqrt {\rho(\eps)}}}{\sqrt {\rho(\eps)}}\Pi^\chi_\eps+ \chi {\mathbf 1}_{\mathbf E}^*\frac{e^{-it\sqrt{P_{\mathbf E}}}}{\sqrt{P_{\mathbf E}}}\psi(P_{\mathbf E}){\mathbf 1}_{\mathbf E}\chi +\O(\eps^\beta t^{-\infty})\\
&=\frac{e^{-it\sqrt {\rho(\eps)}}}{\sqrt {\rho(\eps)}}\Pi^\chi_\eps+\O(t^{-\infty}),
\end{aligned}
$$
and the result follows by observing,
$$
\begin{aligned}
&A_\eps (t):= \re\left[\chi \psi (P_\eps)e^{-it\sqrt {P_\eps}}\chi \right] ;\\
&B_\eps (t):=-\im\left[\chi \psi (P_\eps)\frac{e^{-it\sqrt {P_\eps}}}{\sqrt {P_\eps}}\chi \right].
\end{aligned}
$$
\end{proof}

\begin{remark}\sl 
\label{timeinterval}
By Theorem \ref{mainth}, we know that $\im\sqrt{\rho(\eps)}\sim -e^{-2\alpha_0L/\eps}$. As a consequence, Theorem \ref{propevol} provides the main contribution of the wave decay up to times of order $t_\eps$ such that,
$$
t_\eps={\mathcal O}(\frac1{\varepsilon}|\im\sqrt{\rho(\eps)}|^{-1}),
$$
(that is, much beyond the life-time of the wave), but not beyond times of order $T_\eps$ such that,
$$
T_\eps \geq e^{(2\alpha_0L+\delta)/\eps},
$$
for some $\delta >0$.
\end{remark}

\begin{remark}\sl
\label{splitt}
In \cite{BHM1, BHM2}, a system of two symmetric cavities connected by a thin tube is considered. It is proved that the splitting $E_2-E_1$ between the first two Dirichlet eigenvalues satisfies,
\be
\label{estsplitt}
\lim_{\eps \to 0_+} \eps \ln (E_2-E_1) = -\alpha_0L,
\ee
where, as before, $\alpha_0$ is the ground-sate energy of the cross section of the tube, and $L$ is its length. As for \cite{DGM}, the arguments used in \cite{BHM2} remain valid for more excited states, too, except the final one that relies on the non vanishing of the eigenfunction of the system ``one cavity+tube'' inside the tube. By applying our arguments, we see that this property remains valid if the tube is connected with the cavities away from the nodal set of the corresponding eigenfunctions of the two cavities. In that case, the identity \eqref{estsplitt} can be generalised to the splitting between the pair of eigenvalues of the system that are close to some arbitrary non-degenerate eigenvalue of one cavity alone.
\end{remark}

\section{A remark on nodal domains}

An alternative strategy in order to prove Proposition \ref{propdiff} would consist in considering the nodal domains of $u_0$ and $v_\eps$, that is the connected components of the complementary of their nodal set. A well-known result of Courant (see, e.g., \cite{CoHi}) asserts that if $\lambda_0$ is the $k$-th eigenvalue of $P_{\mathcal C}$, then the total number of nodal domains of $u_0$ is at most $k$. In the particular case $k=2$, this implies that this number is exactly 2 (since the second eigenfunction, being orthogonal to the first one, must vanish somewhere). Therefore, in that case, $u_0$ and $v_\eps$ have the same number of nodal domains, and it can be directly deduced from Lemma \ref{Lueps_positif} and Proposition \ref{propu-v}  that some neighbourhood of ${\mathcal T}(\eps)$ is necessarily included in the same nodal domain of $v_\eps$.

However, this argument is no more valid when $k\geq 3$. Even if $u_0$ has exactly $k$ nodal domains, it does not seem that the same property necessarily holds for $v_\eps$. 

Indeed, in this section we will show that, under perturbation, the number of nodal domains cannot increase. More precisely, in a slightly more general setting, we prove,
\begin{proposition}\sl
\label{stabnod}
Let $\Omega\subset \R^n$ be a bounded open set, and let $u_0$ an eigenfunction of the Dirichlet Laplace operator on $\Omega$, associated with the eigenvalue $\lambda_0$, and with $m$ nodal domains. For any $k\geq 1$, let also $v_k$ be an eigenfunction of the Dirichlet Laplace operator on some bounded open set $\Omega_k\subset \R^n$, associated with the eigenvalue $\lambda_k$, such that,
\begin{itemize}
\item $\lambda_k =\O(1)$ as $k\to \infty$;
\item ${\rm Volume} \left(\Omega_k \backslash \Omega\right)\to 0$  as $k\to \infty$;
\item For any compact set $K\subset \Omega$, there exists $k_0\geq 1$ such that $K\subset \Omega_k$ for all $k\geq k_0$;
\item For any compact set $K\subset \Omega$, $\sup_{K} |v_k-u_0| \to 0$ as $k\to \infty$.
\end{itemize}
Then, for $k$ large enough, the number of nodal domains of $v_k$ in $\Omega_k$ is at most $m$.
\end{proposition}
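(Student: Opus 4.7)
The plan is to argue by contradiction. Assume, along a subsequence, that $v_k$ has $\ell_k\geq m+1$ nodal domains, and let $\Omega^{(1)},\dots,\Omega^{(m)}$ denote the nodal domains of $u_0$ in $\Omega$.

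\textbf{Step 1 (volume lower bound for the nodal domains of $v_k$).} On any nodal domain $N$ of $v_k$, $v_k$ has constant sign, vanishes on $\partial N$, and satisfies $-\Delta v_k=\lambda_k v_k$. Hence (up to sign) $v_k$ is the ground state of the Dirichlet Laplacian on $N$, and $\lambda_1^{\mathrm{Dir}}(N)=\lambda_k$. Combining this with an elementary Sobolev/Faber--Krahn estimate of the type $\lambda_1^{\mathrm{Dir}}(N)\geq c_n|N|^{-2/n}$ (in the same elementary spirit as the estimate used in Section \ref{section5}; in dimension $n=2$ one uses a $p>2$ Sobolev/Trudinger variant) and the assumption $\lambda_k=O(1)$, one obtains a constant $c>0$, independent of $k$, such that $|N|\geq c$ for every nodal domain $N$ of $v_k$.

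\textbf{Step 2 (connected compact cores of the $\Omega^{(i)}$).} Each $\Omega^{(i)}$ is open and path-connected. Fixing $x_i\in\Omega^{(i)}$, I consider $K_i^{(r)}$, the connected component containing $x_i$ of the closed set $\{x\in\Omega^{(i)}\,:\,d(x,\partial\Omega^{(i)})\geq r\}$. Any continuous path in $\Omega^{(i)}$ from $x_i$ to an arbitrary $y\in\Omega^{(i)}$ is compact, hence stays at positive distance from $\partial\Omega^{(i)}$, so $\bigcup_{r>0}K_i^{(r)}=\Omega^{(i)}$. By monotone convergence I can choose $r_i>0$ with $|\Omega^{(i)}\setminus K_i^{(r_i)}|<c/(2m)$ and set $K_i:=\overline{K_i^{(r_i)}}$, a compact connected subset of $\Omega^{(i)}$. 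Since $u_0$ is continuous, nonvanishing, and of constant sign $\sigma_i\in\{\pm1\}$ on $\Omega^{(i)}$, compactness of $K_i$ gives $\min_{K_i}|u_0|\geq\eta_i>0$.

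\textbf{Step 3 (sign matching and pigeonhole).} For $k$ large, the third hypothesis ensures $\bigcup_i K_i\subset\Omega_k$, and the uniform convergence yields $\sup_{K_i}|v_k-u_0|<\eta_i/2$, so $v_k$ has sign $\sigma_i$ and does not vanish on $K_i$. Since $K_i$ is connected and disjoint from the nodal set of $v_k$, it lies in a single nodal domain $N_k^{\phi(i)}$ of $v_k$, defining a map $\phi:\{1,\dots,m\}\to\{1,\dots,\ell_k\}$. If $\ell_k\geq m+1$, some nodal domain $N$ of $v_k$ lies outside the image of $\phi$ and is thus disjoint from $\bigcup_i K_i$. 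Using that the nodal set of the (real-analytic in the interior) eigenfunction $u_0$ has Lebesgue measure zero, hence $\sum_i|\Omega^{(i)}|=|\Omega|$, one estimates
\[
|N|\,\leq\,|\Omega_k|-\sum_i|K_i|\,\leq\,|\Omega_k\setminus\Omega|+\sum_i|\Omega^{(i)}\setminus K_i|\,<\,\frac{c}{2}+o(1),
\]
which contradicts $|N|\geq c$ for $k$ large, concluding the argument.

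The only nontrivial ingredient I foresee is the volume lower bound in Step 1; everything else is a pigeonhole argument combining the disjointness of the $\Omega^{(i)}$, the measure-zero nature of the nodal set of $u_0$, and the three approximation hypotheses on $\Omega_k$ and $v_k$.
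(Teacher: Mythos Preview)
Your argument is correct and follows the same strategy as the paper's proof: a Faber--Krahn/Sobolev bound forces every nodal domain of $v_k$ to have volume at least some $c>0$, while the convergence hypotheses force any nodal domain of $v_k$ not ``attached'' to one of the $\Omega^{(i)}$ to have volume tending to~$0$, yielding a contradiction. The only difference is purely technical: where you fix compact connected cores $K_i\subset\Omega^{(i)}$ in advance and obtain a uniform threshold $k_0$, the paper instead argues pointwise (for each $y$ with $u_0(y)\neq 0$ one joins $y$ to a fixed base point $x_i$ by a path in $\Omega^{(i)}$, on which $v_k\neq 0$ for $k$ large depending on $y$) and then invokes dominated convergence to conclude that $\big|\Omega_k\setminus\bigcup_i V_{k,i}\big|\to 0$.
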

\begin{proof}
Let $U_1, \dots, U_m$ be the nodal domains of $u_0$. For each $i=1,\dots,m$, we choose $x_i\in U_i$. Since $u(x_i)\not= 0$, the assumptions imply that $v_k(x_i)\not =0$ for $k$ large enough, and thus there exists a nodal domain $V_{k,i}$ of $v_k$ such that 
$$
x_i\in V_{k,i}.
$$
Then, for an arbitrary $y\in \Omega$ such that $u(y)\not= 0$, we have $y\in U_i$ for some $i$, and thus there exists a continuous compact path $\gamma \subset U_i$ connecting $y$ to $x_i$. As a consequence, thanks to the last assumption, for $k$ large enough $v_k$ never vanishes on $\gamma$, and thus $\gamma\subset V_{k,i}$. In particular, $y\in V_{k,i}$ for $k$ large enough. Setting,
$$
\omega_k:= \Omega\backslash \bigcup_{i=0}^m V_{k,i},
$$
and using that $\{ u_0=0\}$ has measure 0, we have proved,
$$
{\bf 1}_{\omega_k} (y) \to 0 \mbox{ almost everywhere on } \Omega \mbox{, as } k\to \infty.
$$
By the Lebesgue dominated convergence theorem, we deduce,
$$
{\rm Volume}(\omega_k) \to 0 \mbox{ as } k\to \infty.
$$
Setting also,
$$
\widetilde \omega_k:= \Omega_k\backslash \bigcup_{i=0}^m V_{k,i},
$$
we have $\widetilde \omega_k\subset \omega_k\,  \cup\,  (\Omega_k\backslash\Omega)$, and thus also,
$$
{\rm Volume}(\widetilde\omega_k) \to 0 \mbox{ as } k\to \infty.
$$
But this implies that $\widetilde\omega_k$ does not contain any additional nodal domain of $v_k$. Indeed, if such a domain (say, $\Gamma_k$) existed, its volume should tend to $0$, and $\lambda_k$ would be its first Dirichlet eigenvalue. But then, let us show that $\lambda_k$ should tend to $\infty$ as $k\to \infty$ (which is in contradiction with our first assumption): 
For $n=2$ this is a consequence of the standard Faber-Krahn inequality (see, e.g., \cite{Pl}), and for $n\geq3$ it results from a generalisation of this inequality (see, e.g. \cite{DeG, BuFr}). Alternatively, for $n\geq 3$ we can also use the Gagliardo-Nirenberg-Sobolev inequality (see, e.g., \cite{Ni}),
$$
\Vert v \Vert_{L^{\frac{2n}{n-2}}(\R^n)}\leq C_n\Vert \nabla v\Vert_{L^2(\R^n)},
$$
valid for all $v\in H^1(\R^n)$, and with $C_n>0$ depending only on $n$. By Holder inequality, we also have,
$$
\Vert v\Vert_{L^2(\Gamma_k)}\leq \Vert v \Vert_{L^{\frac{2n}{n-2}}(\Gamma_k)}[{\rm Volume}(\Gamma_k)]^{\frac{1}{n}},
$$
and thus, for $v\in C_0^\infty(\Gamma_k)$,
$$
\Vert \nabla v\Vert_{L^2(\Gamma_k)} \geq \frac1{C_n}[{\rm Volume}(\Gamma_k)]^{-\frac1{n}}\Vert v\Vert_{L^2(\Gamma_k)}.
$$
As a consequence, the first Dirichlet eigenvalue of $\Gamma_k$ is greater than \\
$C_n^{-2}[{\rm Volume}(\Gamma_k)]^{-\frac2{n}}$, and therefore it tends ot $\infty$ as $k\to \infty$.

Finally, since $\sharp \{ V_{k,i}\, ; \, 1\leq i\leq m\} \leq m$, the result is proved.
\end{proof}


\begin{thebibliography}{10}
 
  \bibitem[Be]{Be} 
 J.T.~Beale.
 \newblock{ Scattering frequencies of resonators.}
 \newblock {\em Comm. Pure Appl. Math.}, 26, 549-563, 1973.
 
 \bibitem[BrMa]{BrMa}
P.~Briet and A.~Martinez.
\newblock {Estimates on the molecular dynamics for the predissociation process.}
\newblock {\em Journal of Spectral Theory}, 7, no. 2, 487-517, 2017.

\bibitem[BHM1]{BHM1}
R.M.~Brown, P.D.~Hislop and A.~Martinez.
\newblock {L}ower {B}ounds on {E}igenfunctions and the first {E}igenvalue {G}ap.
\newblock {\em Differential Equations with Applications to Mathematical Physics},
\newblock  W.F.Ames, E.M.Harell, J.V.Herod (Ed.), Mathematical and Science in Engineering, Vol. 192, Academic Press 22, 269--279,  1993.
 
 \bibitem[BHM2]{BHM2}
R.M.~Brown, P.D.~Hislop and A.~Martinez.
\newblock {Lower bounds on the interaction between cavities connected by a thin tube.}
\newblock {\em Duke Math. J.}, 73, no. 1, 163-176, 1994.

 \bibitem[BuFr]{BuFr} 
 D.~Bucur and P.~Freitas.
 \newblock  A free boundary approach to the Faber-Krahn inequality.
 \newblock {\em Preprint}, 2015.

 

 
 \bibitem[BuZw]{BuZw} 
 N.~Burq and M.~Zworski.
 \newblock{  Resonance expansions in Semi-Classical Propagation.}
 \newblock {\em Comm. Math. Phys.}, 223, 1-12, 2001.
 
   \bibitem[CoHi]{CoHi} 
R.~Courant and D.~Hilbert.
 \newblock  {Methods of Mathematical Physics.}
 \newblock {\em Interscience Publishers}, New York, 1953.
 
%
   \bibitem[DeG]{DeG} 
 E.~De Giorgi.
 \newblock  {Sulla proprieta isoperimetrica dell'ipersfera, nella classe degli insiemi aventi frontiera orientata di  misura finita.}
 \newblock {\em Atti Accad. Naz. Lincei. Mem. Cl. Sci. Fis. Mat. Nat.}, Sez. I (8) 5, 33-44, 1958.
 
  
  \bibitem[DKSU]{DKSU} 
  D.~Dos Santos Ferreira, C.E.~Kenig, M.~Salo and G.~Uhlman.
  \newblock { Limiting Carleman weights and anisotropic inverse problems.} \newblock{\em Invent. Math. } 178, 119-171, 2009.
  
   \bibitem[DGM]{DGM} 
  T.~Duyckaerts, A.~Grigis and A.~Martinez.
  \newblock { Resonance widths for general Helmholtz Resonators with straight neck.} 
  \newblock {\em Duke Math. Journal,} 165, no. 14,  2793-2810, 2016.
  
    \bibitem[Ev]{Ev} 
  L.C.~Evans.
  \newblock {Partial Differential Equations.}
  \newblock{\em American Mathematical Society, Graduate Studies in Mathematics}, Vol. 19, 2010.
 

   
   \bibitem[Fe]{Fe} 
    C.A.~Fern\' andez.
 \newblock{Spectral concentration for the Laplace operator in the exterior of a resonator.}
 \newblock {\em  J. Math. Phys.} 26 , no. 3, 383-384, 1985.
  
 \bibitem[FL]{FL} 
 C.~Fern\' andez and R.~Lavine.
 \newblock { Lower bounds for resonance width in potential and obstacle scattering.}
 \newblock{\em Comm. Math. Phys.,} 128, 263-284, 1990.
 
  \bibitem[GiTr]{GiTr}
 D.~Gilbarg and N.S.~Trudinger.
\newblock Elliptic Partial Differential Equations of Second Order.
\newblock{\em   Springer-Verlag}, Berlin, 1983.
 
 \bibitem[HS]{HS} 
 B.~Helffer and J.~ Sj\"ostrand.
 \newblock { R\'esonances en limite semiclassique.}
 \newblock{\em  Bull. Soc. Math. France, M\'emoire }{\bf 24/25}, 1986.

\bibitem[HM]{HM}
P. D.~Hislop and A.~Martinez. 
\newblock Scattering resonances of a Helmholtz resonator.
\newblock{\em  Indiana Univ. Math. J. } 40, no. 2, 767-788, 1991.

\bibitem[KSU]{KSU}  
C. E. Kenig, J.~Sj\"ostrand and G. Uhlmann.
\newblock The Calder\'on problem with partial data.
\newblock {\em  Ann. of Math.} (2) 165, no. 2, 567-591, 2007.


\bibitem[LiLo]{LiebLoss01}
E.H.~Lieb and M.~Loss.
\newblock Analysis, Vol. 14 of {\em Graduate Studies in Mathematics}.
\newblock American Mathematical Society, Providence, RI, second edition, 2001.



  \bibitem[MN1]{MN1}
 A.~Martinez and L.~Nedelec.
\newblock {Optimal lower bound of the resonance widths for a Helmholtz tube-shaped resonator.}
\newblock{\em  J. Spectr. Theory}, 2, 203-223, 2012.

  \bibitem[MN2]{MN2}
 A.~Martinez and L.~Nedelec.
\newblock {Optimal lower bound of the resonance widths for the Helmholtz resonator.}
\newblock{\em  Ann. I. Henri Poincar\'e}, 17, no. 3, 645-672, 2016.

  \bibitem[Ni]{Ni}
 L.~Nirenberg.
\newblock {On elliptic partial differential equations}.
\newblock{\em   Ann. Scuola Normale di Pisa}, 13, 115-162, 1959.

 \bibitem[Pl]{Pl}
 A.~Pleijel.
\newblock{Remarks on Courant's nodal theorem.}
\newblock{\em   Comm. Pure Appl. Math.}, 9, 543-550, 1956.


\bibitem[SjZw]{SjZw}  
J.~Sj\"ostrand and M.~Zworski.
\newblock {Complex scaling and the distribution of scattering poles.}
\newblock {\em J. Amer. Math. Soc.}, 4, 729--769, 1991.

\bibitem[TZ]{TZ}
S.H.~Tang and M.~Zworski. 
\newblock{Resonance expansions of scattered waves.}
\newblock{\em Comm. Pure Appl. Math.}, 53, no. 10, 1305-1334, 2000.

\end{thebibliography}
\end{document}